\theoremstyle{plain}
\newtheorem{thm}{Theorem}[section]
\newtheorem{lem}[thm]{Lemma}
\newtheorem{prop}[thm]{Proposition}
\newtheorem{remark}[thm]{Remark}
\theoremstyle{definition}
\theoremstyle{remark}
\title{Classification of Rings which admits some special type of Polynomial functions }
\author{Souvik Dey} 
\date{}
\begin{document}
\maketitle
 
 \begin{abstract}
 We know that for a finite field $F$, every function on $F$ can be given by a polynomial with coefficients in $F$. What about the converse? i.e.  if $R$ is a ring (not necessarily commutative or with unity) such that every function on $R$ can be given by a polynomial with coefficients in $R$, can we say $R$ is a finite field ? We show that the answer is yes, and that in fact it is enough to only require that all bijections be given by polynomials. If we allow our rings to have unity, we show that the property that all characteristic functions can be given by polynomials actually characterizes finite fields and if we moreover allow our rings to be commutative, then to characterize finite fields, it is enough that some special characteristic function be given by a polynomial (with co efficients even in some extension ring). Motivated by this, we determine all commutative rings with unity which admits a characteristic function which can be given by some polynomial with co efficients in the ring.  
 \end{abstract}

$\mathbf {INTRODUCTION}$ : 
\newline 

The fact that for a finite field, every function on the field can be given by a polynomial, is an easy and well-known fact. But what about the converse ? i.e. If $R$ is a ring (not necessarily commutative or with unity) such that every function on $R$ can be given by a polynomial with coefficients in $R$ ( we will make this notion explicit shortly, right after this brief introduction), then can we conclude that $R$ is a finite field ? The answer, as we will show, is in-fact yes, and indeed the hypothesis that every function on $R$ can be given by a polynomial with coefficients in $R$ can actually be weakened to only requiring that all bijections are given by polynomials. This is shown in $\mathbf {Proposition}$ $\mathbf {1.2}$. If we allow our ring to have unity, then we show in $\mathbf {Proposition}$ $\mathbf {1.3}$ that it is enough to only require that all characteristic functions on $R$ (i.e. all indicator functions of subsets of $R$) can be given by polynomials with coefficients in $R$. If we further allow our ring to be commutative with unity, then we show in $\mathbf {Proposition}$ $\mathbf {2.1}$, that it is enough to only require that only the characteristic function of the subset of non-zero elements can be given by a polynomial with coefficients in some bigger ring. Motivated by this, we study commutative rings with unity which admits some non-trivial polynomial characteristic function i.e. in which some characteristic function of some proper non-empty subset can be given by a polynomial, and we show in $\mathbf {Proposition}$ $\mathbf {2.7}$ that such rings are exactly those which are local, zero dimensional, with finite residue field and finite index of nilpotency. 
\newline

$\mathbf {The}$ $\mathbf {Results}$  and the $\mathbf {Proofs}$ : 
\newline 

All rings below are assumed to be non-zero i.e. that they contain at least one non-zero element.
\newline

First let us define the notion of polynomial ring for general rings (not necessarily commutative or with unity) as we will use it. Note that for commutative rings with unity , the notion coincides with the usual one.
\newline

$\mathbf {Definition}$ : Let $R$ be a ring ( not necessarily commutative or with unity) . By the polynomial ring on $R$ , denoted by $R[X]$ , we mean the monoid-ring $R[\mathbb N_0] =\{ f : \mathbb N_0 \to R\}$ of finitely supported functions ( i.e. $\{n \in \mathbb N_0 : f(n) \ne 0 \}$ is finite ) from the monoid of non-negative integers $\mathbb N_0$ to $R$ . This is made into a ring in a standard way by defining addition of functions point wise , $(f+g)(n):=f(n)+g(n)$ and multiplication of two functions defined as $(f.g)(n):=\sum _{k+l=n} f(k)g(l) $ . If $f \in R[\mathbb N_0]$ is such that  $f(k)=0 , \forall k >n$ , then we usually write $\sum_{i=0}^n f(i) X^i \in R[X]$ to mean the function $f$ , and conversely by $\sum_{i=0}^na_i X^i \in R[X]=R[\mathbb N_0]$ , we mean the function $f : \mathbb N_0 \to R$ such that $f(i)=a_i, i=0,...,n$ $f(k)=0 , \forall k>n$ . For $f(X) =\sum_{i=0}^n a_i X^i \in R[X]$ , and $r\in R$ , by $f(r)$ we mean the element $a_0+\sum_{i=1}^n a_i r^i \in R$ .  
\newline

$\mathbf {Convention}$ :  Let $R \subseteq S$ be rings (not necessarily commutative or with unity). Let $f: R \to R$ be a function. We say that "$f$ can be given by a polynomial" with co-efficients in $S$ if $\exists g(X) \in S[X]$ such that $f(r)=g(r), \forall r \in R$. When we only say that "$f$ can be given by a polynomial", we will mean by a polynomial with coefficients in $R$
\newline

All the results, facts and notions regarding commutative rings used in the paper can be found in \cite{ABO}. 

\section{}

\begin{lem}

Let $R$ be a finite ring (not necessarily commutative or with unity) then the following are equivalent : 

\begin{enumerate}[i)]
\item For every $r,s\in R\setminus \{0\}$, $\exists f(X) \in R[X]$ (depending on $r$ and $s$) such that $f(0)=0$ and $f(r)=s$ 
\item $R$ is a field. 
\end {enumerate}  
\end{lem}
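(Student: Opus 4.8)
The plan is to prove the two implications separately; $(ii)\Rightarrow(i)$ is immediate, and essentially all of the work is in $(i)\Rightarrow(ii)$. For $(ii)\Rightarrow(i)$: if $R$ is a finite field and $r,s\in R\setminus\{0\}$, the polynomial $f(X)=(sr^{-1})X\in R[X]$ works, since $f(0)=0$ and $f(r)=sr^{-1}r=s$. (The familiar fact that \emph{every} function on a finite field is polynomial would also give this, but is not needed.)

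For $(i)\Rightarrow(ii)$, the first step is to translate the hypothesis into a statement about one-sided ideals. By the evaluation rule, $f(0)$ equals the constant term $a_0$ of $f$, so $f(0)=0$ means $f(X)=\sum_{i\ge 1}a_iX^i$, and hence
\[
f(r)=\sum_{i\ge 1}a_i r^{i}=\Bigl(\sum_{i\ge 1}a_i r^{i-1}\Bigr)r\ \in\ Rr:=\{xr:x\in R\};
\]
conversely every element $xr$ of $Rr$ equals $f(r)$ for $f(X)=xX$, which is a legitimate element of $R[X]=R[\mathbb N_0]$ and needs no unit. Thus hypothesis $(i)$ says exactly that $Rr\supseteq R\setminus\{0\}$ for every $r\ne 0$, and since $0=0\cdot r\in Rr$, this forces $Rr=R$ for every nonzero $r\in R$.

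The second step uses finiteness. For $r\ne 0$, right multiplication $\rho_r\colon x\mapsto xr$ has image $Rr=R$, so it is a bijection of the finite set $R$; in particular $xr=0$ implies $x=0$, and since this holds for all $r\ne 0$, $R$ has no zero divisors. Then left multiplication $\lambda_r\colon x\mapsto rx$ is injective, hence bijective, so $rR=R$ as well. Fixing $r_0\ne 0$, surjectivity of $\rho_{r_0}$ and of $\lambda_{r_0}$ yields an $e$ with $er_0=r_0$ and an $e'$ with $r_0e'=r_0$; using $R=r_0R$ one checks that $e$ is a left identity, using $R=Rr_0$ that $e'$ is a right identity, and then $e=ee'=e'$ is a two-sided identity $1$. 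Finally, for each $r\ne 0$, surjectivity of $\rho_r$ and $\lambda_r$ produces $x$ with $xr=1$ and $y$ with $ry=1$, so $x=x\cdot 1=x(ry)=(xr)y=1\cdot y=y$ is a two-sided inverse of $r$. Hence $R$ is a finite division ring, and Wedderburn's little theorem gives that $R$ is commutative, i.e.\ a finite field.

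The main obstacle is bookkeeping rather than depth: because $R$ need not have a unit or be commutative, one must consistently work with the \emph{left} ideal $Rr$ (and later $rR$), manufacture a two-sided identity out of two separate one-sided identities, and deduce a two-sided inverse from two separate one-sided inverses — and only at the very end invoke Wedderburn to pass from a finite division ring to a field.
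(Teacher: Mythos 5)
Your proof is correct and follows essentially the same route as the paper's: the key step in both is that a polynomial with zero constant term evaluated at $r$ lies in $Rr$, so hypothesis (i) forces $Rr=R$ for every $r\neq 0$, after which finiteness and Wedderburn's little theorem finish the job. The only difference is that you spell out in full the standard fact that a finite ring with no nonzero zero-divisors is a division ring with identity (constructing $1$ and two-sided inverses explicitly), whereas the paper phrases the middle step as a contradiction from an assumed zero-divisor and cites that fact implicitly.
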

\begin{proof}

That (ii) implies (i) is obvious since in a finite field, every function can be given by a polynomial. Now to prove (i) implies (ii); since $R$ is assumed to be finite, so to show $R$ is a field, it is enough to show, by Wedderburn's little theorem, $R$ has a unity and is a division ring. But $R$ is finite, so it is enough to show $R$ has no non-zero zero-divisor. So let if possible $uv=0$ for some $u,v \in R \setminus \{0\}$. We claim that $R=Ru=Rv$. Indeed, let $r \in R \setminus \{0\}$, then by the hypothesis of (i) , $\exists f(X) \in R[X]$ such that $f(u)=r$, with $f(0)=0$, so then we can write $f(X)=b_mX^m+...+b_1X$, then $r=f(u)=(b_mu^{m-1}+...+b_1)u \in Ru$. Hence $R=Ru$, and similarly $R=Rv$. Thus $R=Rv=(Ru)v=Ruv=\{0\}$, contradicting our hypothesis. Thus $R$ has no non-zero zero divisor, which is what we wanted to prove. 

\begin{prop}
Let $R$ be a (not necessarily commutative or with unity) ring , then the following conditions on $R$ are equivalent : 
\begin{enumerate}[i)]
\item $R$ is a finite field.
\item Every function from $R$ to $R$ can be given by a polynomial with co efficients in $R$
\item Every bijection on $R$ can be given by a polynomial with co efficients in $R$
\end{enumerate}
\end{prop}

\begin{proof}
That (i) implies (ii) is a well-known standard result. (ii) implies (iii) is immediate. So we prove now (iii) implies (i).
If $R$ were infinite, then the set of all bijections on $R$ has same cardinality as that of $\mathcal P(R)$, the power set of $R$ and $R[X]$ has same cardinality as that of $R$, hence there cannot be any surjection from $R[X]$ onto  the set of all bijections on $R$, so not all bijections on $R$ can be given by polynomials if $R$ is infinite. Thus $R$ is finite. Moreover,if all bijections on $R$ can be given by a polynomial, then for $r,s \in R \setminus \{0\}$ since the transposition $(r \space \space s)$ can also be given by a polynomial, we see that the hypothesis in (i) of Lemma 1.1 is satisfied; and since we already know $R$ is finite, hence by Lemma 1.1, $R$ is a finite field. 
\end{proof}

Now if we allow rings with unity , then we can give another characterization of finite fields, in terms of giving some class of functions by polynomials, as follows.

\begin{prop}
Let $R$ be a (not necessarily commutative) ring with identity such that all characteristic functions of $R$ are given by 
polynomials with coefficients in $R$ . Then $R$ is a finite field.
\end{prop}
\begin{proof}

All characteristic functions of R can be given by a polynomial , so there exist a surjection from $R[X]$ onto the set 

$S:= \{f: R \to \{0,1\}$ : $f$ is a function $ \}$ ; hence $|R[X]|\ge  |S|=|\mathcal P(R)|$ . Now if $R$ were to be infinite , then $|R[X]|=|\cup_{n=1}^\infty R^n|=|R|$ , then we would have $|R| \ge |\mathcal P(R)|$ , contradicting Cantor's power set theorem . Hence $R$ must be finite. So then to prove $R$ is a field , it is enough, by Wedderburn's little theorem, to prove that $R$ is a division ring . To show $R$ is a division ring , let $0 \ne u \in R$ , then the characteristic function of $\{u\}$ in $R$ can be given by a polynomial say $f(X) \in R[X]$ where $f(X)=a_nX^n+...+a_1X+a_0$ , then $f(0)=0$ implies $a_0=0$ , and $1=f(u)=(a_nu^{n-1}+...+a_1)u$ , so for every $ 0 \ne u \in R$ , $\exists u' \in R$ such that $u'.u=1$ , and since $R$ is finite , we also have $u.u'=1$ , hence $R$ is a division ring , which we wanted to prove .  
 
\end{proof}

\section{}

From now on, we will consider commutative rings with unity. 
\newline

\begin{prop}
Let $A \subseteq B$ be commutative rings with unity such that the characteristic function of $A^* (=A \setminus \{0\})$ in $A$ is given by a 
polynomial with coefficients in $B$. Then $A$ is a finite field.
\end{prop}
\begin{proof}
Let $f(X):= b_0+b_1X+b_2X^2+ ... +b_nX^n \in B[X]$ be a polynomial such that $f(0)=0$ and $f(A^*)=\{1\}$. Then $b_0=0$ and 
each nonzero element of $A$ is a unit in $B$. In particular, $A$ is an integral domain. Let $\mathfrak{p}$ be a prime ideal 
in $B$ such that $\mathfrak{p} \cap A=(0)$ so that we have an inclusion $A\hookrightarrow B/\mathfrak{p}$. As 
$\bar f \in B/\mathfrak{p}[X]$ is not a constant, the polynomial $\bar f(X)-1$ can have at most finitely many roots 
in $B/\mathfrak{p}$. But each nonzero element of $A$ is a root of $\bar f(X)-1$. Hence $A$ must be a finite field.
\end{proof}

\end{proof}

\begin{lem} 
Let $R$ be a ring of finite characteristic $n$ and $a,b,c \in R$ such that $a=b+c$ and $c$ is a nilpotent element. Then 
we can choose a positive integer $N$ such that $a^{sN}=b^{sN}$ for all positive integers $s$. 
\end{lem}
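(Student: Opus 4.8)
The plan is to expand $a^{M}=(b+c)^{M}$ by the binomial theorem (legitimate since $R$ is commutative, which is in force throughout this section) and to exploit the fact that this expansion truncates because $c$ is nilpotent, say $c^{k}=0$. Then for every positive integer $M\ge k-1$,
\[
a^{M}-b^{M}=\sum_{j=1}^{k-1}\binom{M}{j}\,b^{M-j}c^{j},
\]
since all terms with $j\ge k$ vanish and the $j=0$ term is $b^{M}$. Because $R$ has characteristic $n$, the integer coefficient $\binom{M}{j}$ acts on $R$ only through its residue modulo $n$; hence the whole right-hand side is $0$ as soon as $n\mid\binom{M}{j}$ for every $j\in\{1,\dots,k-1\}$. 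So the lemma reduces to the purely arithmetic claim that there is a positive integer $N$ with $n\mid\binom{sN}{j}$ for all $s\ge 1$ and all $1\le j\le k-1$.

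To produce such an $N$ I would simply take $N:=n\cdot(k-1)!$ and verify the divisibility one prime at a time. Fix a prime $p\mid n$ and write $e_{p}=v_{p}(n)$ for its exponent. From the standard identity $j\binom{M}{j}=M\binom{M-1}{j-1}$ one gets $v_{p}\!\left(\binom{M}{j}\right)\ge v_{p}(M)-v_{p}(j)$, and since $j\mid(k-1)!$ for $1\le j\le k-1$ we have $v_{p}(j)\le v_{p}\big((k-1)!\big)$. Taking $M=sN$ then gives
\[
v_{p}\!\left(\binom{sN}{j}\right)\ \ge\ v_{p}(sN)-v_{p}\big((k-1)!\big)\ \ge\ v_{p}(N)-v_{p}\big((k-1)!\big)\ =\ v_{p}(n)=e_{p}.
\]
Running this over all prime divisors of $n$ yields $n\mid\binom{sN}{j}$, exactly as required. (Alternatively, one could invoke Kummer's theorem on carries in base $p$, but the elementary estimate above suffices.)

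Combining the two parts: with $N=n(k-1)!$ we have $N\ge k-1$, so the displayed binomial expansion applies to $M=sN$ for every $s\ge 1$, and every coefficient $\binom{sN}{j}$ with $1\le j\le k-1$ is divisible by $n$ and hence annihilates $R$; therefore $a^{sN}=b^{sN}$ for all $s\ge 1$. The degenerate case $k=1$ is trivial since then $c=0$ and $a=b$. I do not expect a genuine obstacle here: the only real decision is the choice of $N$, and once the truncated binomial expansion is written down the remaining input is a routine $p$-adic valuation estimate. The one point to watch is that commutativity (equivalently, $bc=cb$) is what licenses the binomial theorem in the first place.
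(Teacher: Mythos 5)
Your proof is correct and follows essentially the same route as the paper's: truncate the binomial expansion of $(b+c)^{sN}$ using the nilpotency of $c$, and choose $N$ (there, the part of $n(r-1)!$ supported on the primes dividing $n$; here, $n(k-1)!$ itself) so that a $p$-adic valuation estimate forces $n \mid \binom{sN}{j}$ for $1 \le j \le k-1$. Your identity $j\binom{M}{j}=M\binom{M-1}{j-1}$ is a marginally cleaner way to obtain the valuation bound than the paper's direct manipulation of $sN(sN-1)\cdots/j!$, but the underlying argument is the same.
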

\begin{proof}
Let $r$ be the index of nilpotency of $c$ and $n=\prod_ip_i^{e_i}$ be a prime factorization of $n$. For each $i$, we can 
choose a nonnegative integer $\beta_i$ such that $p_i^{\beta_i}$ is the largest power of $p_i$ which divides $(r-1)!$. 
Now if we define $N:=\prod_ip_i^{e_i+\beta_i}$ then for any 
positive integer $s$, 
\[a^{sN}=(b+c)^{sN} = b^{sN}+ sNb^{sN-1}c+ ... +\binom{sN}{r-1}b^{sN-r+1}c^{r-1}+c^r\{...\}\ .\]
For any $0<j<r$, $\binom{sN}{j}= sN(sN-1) \ ...\ (sN-j-1)/j!$. By the choice of $N$, $n.j!$ divides $N$. So the 
characteristic of $R$ divides $\binom{sN}{j}$ for all positive integers $j<r$ and consequently $a^{sN}=b^{sN}$.
\end{proof}

\begin{prop}
Let $(R,\mathfrak{m})$ be a zero dimensional local ring with a finite residue field. Then the following assertions hold 
true.
\begin{enumerate}[(i)]
 \item The group of units $R^*$ is a torsion group.
 \item The exponent of Nil $R$ is finite i.e. there exists a positive integer $r$ such that $a^r=0$ for every $a \in$ Nil $R$ (in such case , we will say that $R$ has finite index of Nilpotency)  if and only if the exponent of $R^*$ is finite.
\end{enumerate}
\end{prop}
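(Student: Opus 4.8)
I would begin by unpacking the hypotheses. Since $R$ is zero‑dimensional and local, $\mathfrak{m}$ is its only prime ideal, so $\mathfrak{m}=\operatorname{Nil}R$ and every element of $\mathfrak m$ is nilpotent. Also $k:=R/\mathfrak m$ is a finite field, of some prime characteristic $p$; hence $p\cdot 1_R\in\mathfrak m$ is nilpotent, so $R$ has finite characteristic, necessarily a power of $p$ — I will fix $N$ with $p^{N}=0$ in $R$. Write $q:=|k|$, and use freely that an element of $R$ is a unit iff its image in $k$ is non‑zero.

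For (i): given $u\in R^{*}$, its image in $k^{*}$ has order dividing $q-1$, so $c:=u^{q-1}-1\in\mathfrak m$ is nilpotent and $u^{q-1}=1+c$. Applying the preceding lemma (on rings of finite characteristic) with $a=u^{q-1}$ and $b=1$ produces a positive integer $N'$ with $(u^{q-1})^{N'}=1^{N'}=1$; hence $u$ has finite order and $R^{*}$ is torsion.

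For (ii), the first step is the auxiliary observation that $1+\mathfrak m$ is a subgroup of $R^{*}$ in which every element has $p$‑power order: if $(1+a)^{d}=1$ with $p\nmid d$, then expanding and factoring out $a$ gives $a\bigl(d+\binom{d}{2}a+\cdots\bigr)=0$, and the second factor is $d$ plus an element of $\mathfrak m$, hence a unit, so $a=0$. Granting this, the direction ``finite index of nilpotency $\Rightarrow$ finite exponent of $R^{*}$'' is quick: if $a^{r}=0$ for every $a\in\mathfrak m$ with one fixed $r$, then for each $u\in R^{*}$ the nilpotent $u^{q-1}-1$ has $r$‑th power zero, and the preceding lemma — whose output, on inspection of its proof, depends only on $\operatorname{char}R$ and on $r$, not on the nilpotent fed into it — yields a single $N'$ with $u^{(q-1)N'}=1$ for all $u$, so $\exp(R^{*})\mid (q-1)N'<\infty$.

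The real work is the converse, ``finite exponent of $R^{*}$ $\Rightarrow$ finite index of nilpotency''. Suppose $\exp(R^{*})=e<\infty$. Then $\exp(1+\mathfrak m)$ divides $e$ and, by the auxiliary fact, is a power of $p$, so I can fix an integer $m\ge N$ with $(1+a)^{p^{m}}=1$ for all $a\in\mathfrak m$. Expanding this relation and using that $p\mid\binom{p^{m}}{i}$ for $1\le i\le p^{m}-1$, it rearranges to $a^{p^{m}}=pz$ for some $z\in\mathfrak m$; then
\[
a^{p^{2m}}=(a^{p^{m}})^{p^{m}}=(pz)^{p^{m}}=p^{p^{m}}z^{p^{m}}=0,
\]
because $p^{m}\ge m\ge N$ forces $p^{p^{m}}=0$ in $R$. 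Hence every element of $\operatorname{Nil}R=\mathfrak m$ is killed by its $p^{2m}$‑th power, i.e. $R$ has finite index of nilpotency. The point I expect to be delicate is exactly this last step — converting a uniform order bound on $1+\mathfrak m$ into a uniform nilpotency bound — which works only because one can squeeze the congruence $a^{p^{m}}\in p\mathfrak m$ out of the binomial expansion and then iterate it against $p^{N}=0$; the fact that $\mathfrak m$ need not be finitely generated means no cruder ``$\mathfrak m$ is nilpotent'' shortcut is available.
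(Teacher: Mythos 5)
Your proof is correct, and while part (i) and the implication ``finite index of nilpotency $\Rightarrow$ finite exponent of $R^*$'' follow the paper essentially verbatim (both write $u^{q-1}=1+c$ with $c$ nilpotent and invoke Lemma 2.2, noting for uniformity that the integer $N$ produced there depends only on $\operatorname{char}R$ and on the nilpotency index), your converse implication takes a genuinely different route. The paper works inside $R[X]$: it forms $f(X)=(X+1)^m-1$, splits $f=g+h$ into its unit-coefficient and nilpotent-coefficient parts, applies Lemma 2.2 to the nilpotent element $h$ of the finite-characteristic ring $R[X]$ to obtain $f^N=g^N$, and evaluates at a nilpotent $c$ to get $c^{sN}=0$. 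You instead argue entirely inside $R$: you isolate the subgroup $1+\mathfrak{m}$ of $R^*$, show its elements have $p$-power order via the factorization $a\bigl(d+\binom{d}{2}a+\cdots\bigr)=0$ with the second factor a unit, conclude its exponent is a $p$-power $p^m$, and then extract $a^{p^m}\in p\mathfrak{m}$ from the binomial expansion and iterate against $p^N=0$. Your route avoids Lemma 2.2 in this direction and avoids the (true but tacitly used) fact that a polynomial with nilpotent coefficients is nilpotent in $R[X]$, at the cost of the extra $p$-group observation; the paper's route is shorter once Lemma 2.2 is available and reuses the same $g+h$ decomposition that reappears in Proposition 2.6, and both yield comparable explicit bounds on the nilpotency index. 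One expository nit: your displayed computation only shows that an element of $1+\mathfrak{m}$ whose order is prime to $p$ is trivial; to reach ``every element has $p$-power order'' you should add the one-line reduction that applies this to $(1+a)^{p^j}$, where $p^j$ is the $p$-part of the (finite, by (i)) order of $1+a$.
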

\begin{proof}
\begin{enumerate}[(i)]
\item Let $|R/\mathfrak{m}|=n$. If $a\in R$ is a unit then $a^{n-1}-1 \in$ Nil $R$ so that $a^{n-1}=1+c$ for some nilpotent 
element $c \in R$. Now it follows from {\it Lemma 2.2} that $a \in R^*$ has a finite order.
\item First let $R$ have a finite index of nilpotency, say $r$. If $u$ is a unit of $R$ and $|R/\mathfrak{m}|=n$ then 
$u^{n-1}=1+c$ for some $c\in$ Nil $R$. Let $p^e$ be the characteristic of $R$ where char $R/\mathfrak{m}=p$. Let $N$ be a 
positive integer such that $p^e.(r-1)!$ divides $N$. Now arguments given as in {\it Lemma 2.2} shows that $u^{N(n-1)}=1$ 
implying that $R^*$ has a finite exponent.\\
Conversely, let $R^*$ have a finite exponent, say $m$. Then any unit of $R$ satisfies the equation $X^m=1$. Therefore if 
$c$ is any nilpotent element of $R$ then $f(c)=0$ where $f\in R[X]$ is the polynomial defined by $f(X):=(X+1)^m-1$. We can 
write $f$ as a sum of two polynomials $f=g+h$, where each nonzero coefficient of $g$ is a unit and each nonzero coefficient 
of $h$ is a nilpotent respectively. Let $s$ be the least positive integer such the coefficient of $X^s$ in $g$ is nonzero. 
As $R[X]$ has a finite characteristic and $h\in R[X]$ is a nilpotent element, {\it Lemma 2.2} implies that there exists a 
positive integer $N$ such that $f^N=g^N$. Consequently, $g(c)^N=0$ for any nilpotent element $c \in R$. But 
$g(c)^N=c^{sN}u$ for some unit $u \in R^*$, implying that $c^{sN}=0$ and therefore $R$ has a finite index of nilpotency. 
\end{enumerate}
\end{proof}

\begin{lem}
Let $B$ be a commutative ring and $f \in B[X]$ be a polynomial. If $A$ is a subring of $B$ such that $f(A)$ is finite and 
$\bar f \in B/\mathfrak{p}[X]$ is not a constant on $A$ for any minimal prime ideal $\mathfrak{p} \subseteq B$ then $A$ is 
residually finite and $|A/\mathfrak{m}|\leqslant |f(A)|.\text{ deg }f$ for all maximal ideals $\mathfrak{m} \subseteq A$.
\end{lem}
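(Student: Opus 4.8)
The plan is to reduce the whole statement, for each maximal ideal $\mathfrak{m}$ of $A$, to the elementary fact that a nonzero one–variable polynomial over a field has at most $\deg$ many roots. First I would fix a minimal prime ideal $\mathfrak{p}\subseteq B$, put $D:=B/\mathfrak{p}$ (an integral domain), $K:=\operatorname{Frac}(D)$, and let $\bar f\in D[X]\subseteq K[X]$ be the reduction of $f$. Composing $A\hookrightarrow B\twoheadrightarrow D\hookrightarrow K$ gives a ring homomorphism $\pi_{\mathfrak{p}}\colon A\to K$ with kernel $\mathfrak{p}\cap A$, and for every $a\in A$ one has $\bar f(\pi_{\mathfrak{p}}(a))=\pi_{\mathfrak{p}}(f(a))$. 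Since $f(A)$ is finite, the element $\bar f(\pi_{\mathfrak{p}}(a))$ takes at most $|f(A)|$ distinct values as $a$ ranges over $A$; and since $\bar f$ is not constant on $A$ it is in particular a non-constant polynomial over the field $K$, so each fibre $\{\,x\in K:\bar f(x)=v\,\}$ has at most $\deg\bar f\le\deg f$ elements. Hence $\pi_{\mathfrak{p}}(A)$, i.e. $A/(\mathfrak{p}\cap A)$, is finite of cardinality at most $|f(A)|\cdot\deg f$.

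Next I would use that a finite integral domain is a field: this shows that $\mathfrak{p}\cap A$ is a \emph{maximal} ideal of $A$ for every minimal prime $\mathfrak{p}$ of $B$, with residue field of order $\le|f(A)|\cdot\deg f$. It then remains to show that \emph{every} maximal ideal $\mathfrak{m}\subseteq A$ arises in this way. For this, set $T:=A\setminus\mathfrak{m}$, a multiplicative subset of $B$ with $0\notin T$; then $T^{-1}B\neq 0$, so it has a prime ideal whose contraction to $B$ is a prime $\mathfrak{q}$ disjoint from $T$, and any minimal prime $\mathfrak{p}\subseteq\mathfrak{q}$ of $B$ is then still disjoint from $T$, i.e. $\mathfrak{p}\cap A\subseteq\mathfrak{m}$. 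By the previous paragraph $\mathfrak{p}\cap A$ is maximal, hence $\mathfrak{p}\cap A=\mathfrak{m}$, and therefore $|A/\mathfrak{m}|=|A/(\mathfrak{p}\cap A)|\le|f(A)|\cdot\deg f$. As this bound is finite, $A$ is residually finite, which is the assertion.

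The genuine obstacle is exactly the last point: for an arbitrary subring $A\subseteq B$ a maximal ideal of $A$ need not be the contraction of any prime of $B$ (e.g. $\mathbb{Z}\subseteq\mathbb{Q}$), so one cannot invoke lying–over directly; the localization at $A\setminus\mathfrak{m}$ circumvents this by producing a minimal prime of $B$ sitting under $\mathfrak{m}$, after which the finite–domain argument forces equality. I would also flag that the hypothesis ``$\bar f$ is non-constant on $A$ for every minimal prime'' is used precisely to guarantee $\deg\bar f\ge 1$, without which the fibre bound $\deg f$ — and indeed finiteness of $\pi_{\mathfrak{p}}(A)$ — would fail. (If one only wanted residual finiteness without the sharp bound, one could instead observe that $A/\operatorname{Nil}A$ embeds into a product of finite fields of bounded order and quote that such rings are von Neumann regular with finite residue fields; but the argument above delivers the explicit bound, which is what the later propositions will need.)
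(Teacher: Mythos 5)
Your proof is correct. The core computation --- bounding $|A/(\mathfrak{p}\cap A)|$ by $|f(A)|\cdot\deg f$ by counting roots of $\bar f(X)-v$ over the domain $B/\mathfrak{p}$ as $v$ ranges over the images of $f(A)$ --- is exactly the paper's. Where you genuinely diverge is in the prime-ideal bookkeeping that connects maximal ideals of $A$ to minimal primes of $B$. The paper works top-down: it asserts (without proof) that every minimal prime $\mathfrak{q}$ of $A$ is the contraction of a minimal prime of $B$, deduces that each $A/\mathfrak{q}$ is finite, and then gets every $A/\mathfrak{m}$ as a quotient of some $A/\mathfrak{q}$. You work bottom-up: localizing $B$ at $A\setminus\mathfrak{m}$ produces a minimal prime $\mathfrak{p}$ of $B$ with $\mathfrak{p}\cap A\subseteq\mathfrak{m}$, and your observation that $A/(\mathfrak{p}\cap A)$ is a finite integral domain, hence a field, forces $\mathfrak{p}\cap A=\mathfrak{m}$. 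Your route is more self-contained --- it effectively supplies the proof of the lying-under fact the paper takes for granted --- and it yields as a byproduct that $A$ is zero-dimensional, which the paper only extracts later in Lemma 2.5. Both arguments give the same bound, and you are right to flag that the non-constancy hypothesis modulo every minimal prime is exactly what makes the fibre count (and hence finiteness) work.
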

\begin{proof}
Let $\mathfrak{q}$ be a minimal prime ideal of $A$. Then there exists a minimal prime ideal $\mathfrak{p}$ in $B$ such that 
$\mathfrak{p}\cap A=\mathfrak{q}$. If $f(A)=\{b_1, b_2, ... ,b_n\} \subseteq B$ then for the natural inclusion 
$A/\mathfrak{q}\hookrightarrow B/\mathfrak{p}$, we have 
$\bar f(A/\mathfrak{q})\subseteq \{\bar b_1, \bar b_2, ... ,\bar b_n\}\subseteq B/\mathfrak{p}$. Since $\bar f$ is not a 
constant, the polynomials $\bar f(X) - \bar b_i$ can have at most deg $\bar f$ roots in $B/\mathfrak{p}$ for all $i$. Hence 
$A/\mathfrak{q}$ can have at most $|\bar f(A/\mathfrak{q})|.\text{ deg } \bar f$ elements. Since this is true for any 
minimal prime ideal $\mathfrak{q}$ in $A$, we conclude that $A$ is residually finite and 
$|A/\mathfrak{m}|\leqslant |f(A)|.\text{ deg }f$ for all maximal ideals $\mathfrak{m} \subseteq A$.
\end{proof}

\begin{lem}
Let $R$ be a commutative ring and $f\in R[X]$ be a polynomial such that $f(R)$ is finite. Suppose that for any 
maximal ideal $\mathfrak{m}$ of $R$, the image of $f$ in $R/\mathfrak{m}[X]$ is a non-constant function (This is needed. 
Otherwise, consider $X(X+1)$ on $\mathbb{Z}/2\mathbb{Z}\times\mathbb{Z}/3\mathbb{Z}$.). Then $R$ is a zero 
dimensional semi-local ring with finite residue fields and the cardinality of Spec $R$ is at most equal to the number of 
prime factors of $|f(R)|$, counted with multiplicity. In particular, if $R$ admits a nontrivial polynomial characteristic 
function then $R$ must be local.
\iffalse \item Let $(R,\mathfrak{m})$ be a local ring. If $f \in R[X]$ is a polynomial such that $|f(R)|$ is finite and 
$\bar f \in R/\mathfrak{m}[X]$ is not a constant then $R$ has a finite index of nilpotency. In particular, if $R$ admits a 
nontrivial polynomial characteristic function then $R$ has a finite index of nilpotency. 
\end{enumerate}
\fi
\end{lem}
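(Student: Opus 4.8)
The plan is: first, by reducing modulo primes, to see that $R$ is zero-dimensional with finite residue fields; then to bound the number of maximal ideals; then, after splitting $R$ as a finite product of local rings, to read the sharp bound off a multiplicative decomposition of $|f(R)|$; and finally to specialise to characteristic functions.

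First I would fix a prime $\mathfrak p$ of $R$ and pass to the integral domain $R/\mathfrak p$. The reduced polynomial $\bar f\in(R/\mathfrak p)[X]$ has finite value set --- it is the image of the finite set $f(R)$ under $R\twoheadrightarrow R/\mathfrak p$ --- and it is a non-constant function, since otherwise a further reduction modulo a maximal ideal $\mathfrak m\supseteq\mathfrak p$ would make $f$ constant modulo $\mathfrak m$, contrary to hypothesis. As over a domain $\bar f(X)-c$ has at most $\deg\bar f$ roots for each value $c$, a finite value set forces $R/\mathfrak p$ to be finite, hence a field, hence $\mathfrak p$ maximal. So $R$ is zero-dimensional with finite residue fields (the latter is also a case of {\it Lemma 2.4} with $B=R$, whose minimal-prime hypothesis is implied by ours because residue maps are surjective).

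Next I would bound $|\operatorname{Spec}R|$, which by the above equals the number of maximal ideals. For any distinct maximal ideals $\mathfrak m_1,\dots,\mathfrak m_k$ with residue fields $\kappa_i$, the Chinese Remainder Theorem gives a ring surjection $R\twoheadrightarrow\prod_{i=1}^k\kappa_i$ through which $f$ acts coordinatewise by the induced self-maps $\bar f_i$ of $\kappa_i$, so $f(R)$ surjects onto $\prod_{i=1}^k\bar f_i(\kappa_i)$; since each $\bar f_i$ is non-constant this forces $2^k\le|f(R)|$, so there are only finitely many maximal ideals and we may take $\mathfrak m_1,\dots,\mathfrak m_k$ to be all of them. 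Being zero-dimensional and semi-local, $R$ has $\bigcap_i\mathfrak m_i=\operatorname{Nil}R$ a nil ideal and the $\mathfrak m_i$ pairwise comaximal, so lifting the evident complete orthogonal system of idempotents of $R/\operatorname{Nil}R\cong\prod_i\kappa_i$ through $\operatorname{Nil}R$ decomposes $R\cong\prod_{i=1}^kR_{\mathfrak m_i}$ into zero-dimensional local rings. Under this decomposition $f=(f_1,\dots,f_k)$ with $f_i\in R_{\mathfrak m_i}[X]$, whence
\[
|f(R)|=\prod_{i=1}^k|f_i(R_{\mathfrak m_i})| ,
\]
and each factor is at least $2$, because $f_i$ is non-constant modulo the maximal ideal of $R_{\mathfrak m_i}$ and so already has image of size $\ge 2$ in $\kappa_i$. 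Hence each factor contributes at least one prime to $|f(R)|$, so $|\operatorname{Spec}R|=k$ is at most the number of prime factors of $|f(R)|$ counted with multiplicity.

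For the final assertion: if $g\in R[X]$ restricts on $R$ to the characteristic function of a proper nonempty subset $S$, then $g(R)=\{0,1\}$ so $|g(R)|=2$, and $g$ is automatically non-constant modulo every maximal ideal $\mathfrak m$ --- a constant reduction would have value $\bar 0$ or $\bar 1$, and so would force $g\equiv 0$ or $g\equiv 1$ on all of $R$ (neither $1$ nor $-1$ lies in $\mathfrak m$), contradicting $\emptyset\ne S\ne R$. So the main part applies with $|g(R)|=2$, giving $|\operatorname{Spec}R|\le 1$; as $R$ has at least one maximal ideal, $R$ is local. The step I expect to be delicate is upgrading the easy estimate $2^k\le|f(R)|$ --- which by itself gives only $k\le\log_2|f(R)|$ --- to the stated prime-count bound: this genuinely uses the \emph{equality} $|f(R)|=\prod_i|f_i(R_{\mathfrak m_i})|$, not just $\prod_i|\bar f_i(\kappa_i)|\le|f(R)|$, and hence the product decomposition $R\cong\prod_iR_{\mathfrak m_i}$ --- so the idempotent-lifting step is where I would be most careful. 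The rest is routine.
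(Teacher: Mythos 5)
Your proof is correct, and the first half (zero-dimensionality and finite residue fields via reduction modulo primes) matches the paper's use of Lemma 2.4. Where you genuinely diverge is in the counting step. The paper first proves the bound for Noetherian $R$ (zero-dimensional $+$ residually finite $\Rightarrow$ Artinian $\Rightarrow$ finite, then the structure theorem gives $R=R_1\times\cdots\times R_l$), and then handles general $R$ by a reduction: it picks $n+1$ hypothetical maximal ideals, witnesses $a_i,b_i$ separating $f$ modulo each, separating elements $c_i$, a finite set $\Omega$ with $f(\Omega)=f(R)$, and passes to the finitely generated subring $R'$ these generate, deriving a contradiction from the Noetherian case. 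You instead work with $R$ itself: the Chinese Remainder estimate $2^k\le|f(R)|$ shows semi-locality directly, and then you lift the complete orthogonal idempotents of $R/\operatorname{Nil}R\cong\prod_i\kappa_i$ through the nil ideal $\operatorname{Nil}R=\bigcap_i\mathfrak m_i$ to split $R\cong\prod_iR_{\mathfrak m_i}$, after which $|f(R)|=\prod_i|f_i(R_{\mathfrak m_i})|$ with each factor $\ge 2$ gives the prime-count bound. Both routes end at the same multiplicative decomposition; yours avoids the subring reduction (and the implicit verification that $\mathfrak m_i\cap R'$ is maximal in $R'$, which needs that a subring of a finite field is a field), at the price of invoking idempotent lifting modulo a nil, not necessarily nilpotent, ideal in a possibly non-Noetherian ring --- a standard fact, but one you should cite or prove, since it is the load-bearing step you yourself flag as delicate. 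The paper's detour through finitely generated subrings exists precisely to stay within the finite-ring structure theorem and sidestep that lifting argument.
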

\begin{proof}
Let $\mathfrak{p}$ be a minimal prime ideal of $R$ and $\mathfrak{m}\subseteq R$ be a maximal ideal such that 
$\mathfrak{p} \subseteq \mathfrak{m}$. As the image of $f$ in $R/\mathfrak{m}[X]$ is not a constant function, 
$\bar f \in R/\mathfrak{p}[X]$ is not a constant function either. Since this is true for all minimal prime ideals of $R$, 
it follows from {\it Lemma 2.4} that $R$ is residually finite. In fact the proof of Lemma 2.4 shows that $|R/p|$ is finite for every minimal prime ideal $p$ , hence $R/P$ is finite for every prime ideal $P$ of $R$ , hence $R$ is zero dimensional. \\ 
To prove the assertion about the cardinality of Spec $R$, let us first assume $R$ to be a Noetherian ring. Then since $R$ has dimension zero, it is artinian and as $R$ is
residually finite(i.e. all residue fields of $R$ are finite), so $R$ is finite, then we can write it as a product $R=R_1\times R_2 \times ... \times R_l$ where each $R_i$ is a finite 
local ring. For each $i$, let $f_i$ be the image of $f$ under the natural projection $\pi_i:R \rightarrow R_i$. Then it is 
easy to see that $f(R)=f_1(R_1)\times f_2(R_2)\times ... \times f_l(R_l)$. Also, as the image of $f$ in $R/\mathfrak{m}[X]$ 
is not a constant function for all maximal ideals $\mathfrak{m}$ in $R$, $|f_i(R)|>1$ for all $i$. Therefore it follows 
that the cardinality of $\text{Spec }R$ cannot be more than the number of prime factors of $|f(R)|$, counted with 
multiplicity.\\
Next, let $R$ be a general commutative ring and $n$ be the number of prime factors of $|f(R)|$, counted with 
multiplicity. We want to show that $|\text{Spec }R|\leqslant n$. So, if possible, let us choose $n+1$ maximal ideals 
$\mathfrak{m}_1, \mathfrak{m}_2, ... , \mathfrak{m}_{n+1}$ in $R$. As the image of $f$ in $R/\mathfrak{m}_i[X]$ is not a 
constant function for all $i$, we can choose $2n+2$ elements $a_1,b_1,a_2,b_2, ... ,a_{n+1},b_{n+1}\in R$ such that the 
image of $f(a_i)$ is not equal to the image of $f(b_i)$ in $R/\mathfrak{m}_i$ for all $i$. Also, since 
$\mathfrak{m}_1, \mathfrak{m}_2, ... ,\mathfrak{m}_{n+1}$ are distinct maximal ideals, we can choose $n+1$ elements 
$c_1, c_2, ... ,c_{n+1}$ such that $c_i \in \mathfrak{m}_i \setminus \cup_{j,i\neq j}\mathfrak{m}_j$ for all $i$. Let 
$\Omega$ be a finite subset of $R$ satisfying $f(\Omega)=f(R)$. Now consider the Noetherian subring of $R$, say $R'$, 
generated by $\Omega$, $a_i,b_i,c_i$ for all $i$, and the coefficients of $f$. The ring $R'$, being a subring of a 
residually finite ring $R$, is itself residually finite. Also, by construction, $f(R')=f(R)$, 
$\mathfrak{n}_i:=\mathfrak{m}_i\cap R'$ are distinct maximal ideals of $R'$, and the image of the polynomial 
$f\in R'[X]$ in $R'/\mathfrak{n}_i[X]$ is not a constant function for all $i$. So imitating the proof of the Noetherian 
case we can deduce that the number of maximal ideals in $R'$ cannot be more than $n$, a contradiction. We, therefore, 
conclude that the cardinality of Spec $R$ is at most equal to the number of prime factors of $|f(R)|$, counted with 
multiplicity.
\end{proof}

\begin{prop}
Let $(R,\mathfrak{m})$ be a zero dimensional local ring with a finite residue field. Then the following statements are 
equivalent.
\begin{enumerate}[(i)]
 \item There exists a polynomial $f \in R[X]$ such that $f(R)$ is finite and $\bar f \in R/\mathfrak{m}[X]$ is a 
 non-constant polynomial.
 \item The ring $R$ admits a nontrivial polynomial characteristic function (i.e. the characteristic function of some non-empty proper subset of $R$ can be given by a polynomial) .
\end{enumerate}
In this case, $R$ has a finite index of nilpotency.\\
Conversely, if a zero dimensional local ring $(R,\mathfrak{m})$ with a finite residue field has a finite index of 
nilpotency, then any polynomial $f \in R/\mathfrak{m}[X]$ can be lifted to a polynomial $\tilde f \in R[X]$ satisfying 
$|\tilde f(R)|=|f(R/\mathfrak{m})|$.
\end{prop}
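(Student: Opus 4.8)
The plan is to run the cycle (ii) $\Rightarrow$ (i) $\Rightarrow$ (finite index of nilpotency) $\Rightarrow$ (ii), and to extract the converse assertion of the proposition from the proof of the last implication. The first link is immediate: if $g\in R[X]$ gives the indicator of a proper non-empty $E\subseteq R$, then $g(R)=\{0,1\}$ is finite and $\bar g\in R/\mathfrak m[X]$ cannot be constant, since otherwise all values of $g$ would reduce to one element modulo $\mathfrak m$, forcing $1=1-0\in\mathfrak m$; so $g$ witnesses (i).

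For "(finite index of nilpotency) $\Rightarrow$ (ii)" and the converse, I would build a polynomial retraction onto Teichm\"uller representatives. Assuming $(R,\mathfrak m)$ is zero dimensional local with residue field $\mathbb F_q$ and finite index of nilpotency, note $\mathfrak m=\mathrm{Nil}\,R$ and (since $p\cdot 1\in\mathfrak m$) that $R$ has characteristic $p^e$; by Lemma 2.2 and Proposition 2.3 the group $R^{*}$ has finite exponent, and a short computation shows that for all large $t$ the polynomial $\tau(X):=X^{q^{t}}$ satisfies $\tau(c)=0$ for $c\in\mathfrak m$ while $\tau(u)$ is the unique element of multiplicative order dividing $q-1$ congruent to $u$ modulo $\mathfrak m$ for $u\in R^{*}$. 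Thus $T:=\tau(R)$ is a set of representatives for $R/\mathfrak m$ of size $q$, with $\tau(x)\equiv x\pmod{\mathfrak m}$ and $\tau|_{T}=\mathrm{id}$. For any $f\in R/\mathfrak m[X]$, lifting the coefficients to some $\hat f\in R[X]$ and setting $\tilde f(X):=\tau\bigl(\hat f(\tau(X))\bigr)=\bigl(\hat f(X^{q^{t}})\bigr)^{q^{t}}\in R[X]$, one checks that $\tilde f(r)$ depends only on $\bar r$ and equals the representative in $T$ of $f(\bar r)$; hence $\tilde f(R)$ is in bijection with $f(R/\mathfrak m)$ and $\tilde f$ reduces modulo $\mathfrak m$ to the function $f$, which is the converse assertion. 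Taking $f(X)=1-X^{q-1}$ (the indicator of $\{0\}$ in $\mathbb F_q$) gives $\tilde f(R)=\{0,1\}$, i.e. $\tilde f$ is the indicator of the proper non-empty set $\mathfrak m$, establishing (ii).

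The substantial implication is (i) $\Rightarrow$ (finite index of nilpotency). Let $f\in R[X]$ have $f(R)$ finite and $\bar f$ a non-constant polynomial, and put $K:=|f(R)|$, $D:=\deg f$. First I would reduce to finite local rings: given $c\in\mathfrak m$, let $S$ be the subring of $R$ generated by $c$, the coefficients of $f$, and a finite $\Omega$ with $f(\Omega)=f(R)$; by the proof of Lemma 2.4 the ring $S$ is zero dimensional and residually finite with all residue fields of size $\leq KD$, and being also Noetherian it is Artinian, hence finite, and it has no nontrivial idempotents because $R$ is local, so $S$ is local; moreover $f\in S[X]$, $f(S)=f(R)$, and $\bar f$ is still non-constant. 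So it suffices to bound, in terms of $K$ and $D$ only, the nilpotency index of an element of a finite local ring $(S,\mathfrak n)$ carrying such an $f$. If some translate $f(X+\alpha)$ with $\alpha\in S$ has a unit degree-one coefficient — equivalently, if $\bar f'$ does not vanish identically on the residue field — then that polynomial is injective on $\mathfrak n$ (its values at two points of $\mathfrak n$ differ by that difference times a unit), so $|\mathfrak n|\leq K$ and $S$ has bounded order, which is far more than needed.

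The main obstacle is the complementary, "inseparable", case in which $\bar f'$ vanishes identically on the residue field. When $\bar f'=0$ identically one has $\bar f\in\mathbb F_q[X^{p}]$, and the plan is a Frobenius descent: writing $f(X)=\hat\psi(X^{p})+h(X)$ with $\hat\psi(Y):=\sum_j a_{pj}Y^{j}$ and $h$ having all coefficients in $\mathfrak n$, one passes to $\hat\psi$ (of degree $D/p$, reduction still non-constant) and repeats until the reduction is no longer a $p$-th power, then transfers the resulting bound back — e.g. by applying Lemma 2.2 in $R[X]$ to replace $f$ by a power of $\hat\psi(X^{p})$ and using that $f(S)$ finite controls the possible values of $\hat\psi$ along the powers of a deep nilpotent (where the nilpotency indices decrease strictly, so only $\leq K$ of them can be nonzero). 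I expect the real difficulty to be twofold: the residual case $\bar f'\neq 0$ but $(X^{q}-X)\mid\bar f'$ (where $\bar f$ need not be a function of $X^{p}$, so a cruder argument or a cleverer choice of evaluation points is required), and the bookkeeping of the error terms $h$ along the descent, whose nilpotency index in $R[X]$ must be bounded although a priori it involves the quantity being estimated. Granting such a uniform bound $\rho(c)\leq\psi(K,D)$, it applies to every $c\in\mathfrak m$, so $R$ has finite index of nilpotency, which closes the cycle.
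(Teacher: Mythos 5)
Your link ``(i) $\Rightarrow$ finite index of nilpotency'' is a genuine gap, and you say so yourself: the argument ends with ``granting such a uniform bound $\rho(c)\leq\psi(K,D)$''. The reduction to finite local rings and the separable case ($\bar f'$ not identically zero on the residue field) are fine, but the inseparable case is exactly where the content lies, and the proposed Frobenius descent is not carried out --- in particular you give no control on the error polynomials $h$ accumulated along the descent, whose nilpotency index is the very quantity being bounded. The difficulty is self-inflicted by the order in which you run the cycle. The paper instead goes (i) $\Rightarrow$ (ii) first: since $f(R)$ is finite and, by Proposition 2.3(i), every unit of $R$ is torsion while every element of $\mathfrak{m}=\mathrm{Nil}\,R$ is nilpotent, some power $f^N$ takes values only in $\{0,1\}$ and is a nontrivial characteristic function. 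The nilpotency bound is then extracted from a \emph{characteristic} function $F$ with $F(0)=0$ (replace $F$ by $(F-1)^2$ if needed): write $F=g+h$ with $g$ having unit coefficients and $h$ nilpotent coefficients, let $s$ be the lowest degree occurring in $g$, and apply Lemma 2.2 in $R[X]$ to get $F^M=g^M$. For nilpotent $c$ one has $F(c)\in\{0,1\}\cap\mathfrak{m}=\{0\}$, so $0=F(c)^M=g(c)^M=c^{sM}\cdot(\text{unit})$, giving the uniform bound $sM$. The normalization to a $\{0,1\}$-valued function is what pins down $f$ on $\mathfrak{m}$ and makes the entire inseparable-case analysis unnecessary; you should restructure your cycle as (i) $\Rightarrow$ (ii) $\Rightarrow$ nilpotency rather than (i) $\Rightarrow$ nilpotency.

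The other two parts of your proposal are sound. The implication (ii) $\Rightarrow$ (i) is the same triviality as in the paper. For ``finite index of nilpotency $\Rightarrow$ lifting $\Rightarrow$ (ii)'' your Teichm\"uller retraction $\tau(X)=X^{q^t}$ and $\tilde f=\tau\circ\hat f\circ\tau$ is a genuinely different construction from the paper's Lagrange-type interpolant $\tilde f(X)=\sum_i\beta_i\bigl(\prod_{j\neq i}(X-\alpha_j)\bigr)^{Ne'}$, and it works: the needed facts ($\exp(1+\mathfrak{m})$ is a power of $p$, so $u^{q^t(q-1)}=1$ for $t$ large, and $q^t$ exceeds the nilpotency index) follow from Proposition 2.3. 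It is arguably cleaner in that $\tilde f(R)$ lands in a canonical set of representatives, whereas the paper's interpolation needs the bookkeeping condition $\beta_i=\beta_j\iff\bar\beta_i=\bar\beta_j$; but it is not needed to repair the main gap above.
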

\begin{proof}
First, let $f$ be a polynomial in $R[X]$ such that $f(R)$ is finite and $\bar f \in R/\mathfrak{m}[X]$ is not a constant. 
Let $f(R)=\{r_1, ... ,r_m,r_{m+1}, ... ,r_n\}$, where $\{r_1, ... ,r_m\} \subseteq \mathfrak{m}$ and $r_{m+1}, ... ,r_n$ 
are units. It follows from {\it proposition 2.3(i)} that each $r_i$ has a finite order for $i=m+1, ... ,n$. So we can find 
a positive integer $N$ such that $r_i^N=1$ for all $i=m+1, ... ,n$ and $r_j^N=0$ for all $j\leqslant m$, implying that 
$f^N$ is a nontrivial polynomial characteristic function. The reverse implication is obvious. Next, we want to show that if 
$f \in R[X]$ is a nontrivial polynomial characteristic function then $R$ has a finite index of nilpotency. Replacing $f$ by 
$(f-1)^2$, if required, we may assume that $f(0)=0$. So let $f(X)=a_1X+a_2X^2+ ... +a_nX^n$. We can write $f$ as a sum of 
two nonzero polynomials $f=g+h$, where each nonzero coefficient of $g$ is a unit and each nonzero coefficient of$h$ is a 
nilpotent respectively. Let $s$ be the least positive integer such the coefficient of $X^s$ in $g$ is nonzero. As 
$R/\mathfrak{m}$ is 
finite, $R[X]$ has a finite characteristic. Also, $h\in R[X]$ being a nilpotent element, {\it lemma 2.2} implies that 
there exists a positive integer $N$ such that $f^N=g^N$. Consequently, $g(c)^N=0$ for any nilpotent element $c \in R$. But 
$g(c)^N=c^{sN}u$ for some unit $u \in R^*$, implying that $c^{sN}=0$ and therefore $R$ has a finite index of nilpotency.\\
Conversely, let $(R,\mathfrak{m})$ be a zero dimensional local ring with a finite residue field such that $R$ has a finite 
index of nilpotency, say $e$. Let $f \in R/\mathfrak{m}[X]$ be any polynomial. As $R/\mathfrak{m}$ is finite, we can find 
$\alpha_1,\alpha_2, ... ,\alpha_n, \beta_1, \beta_2, ... ,\beta_n \in R$ such that 
$R/\mathfrak{m}=\{\bar \alpha_1,\bar \alpha_2, ... ,\bar \alpha_n\}$, $f(\bar \alpha_i)=\bar \beta_i$ for all $i$, 
$\bar \alpha_i \neq \bar \alpha_j$ for all $i\neq j$, and $\beta_i=\beta_j$ if and only if $\bar \beta_i=\bar \beta_j$ for 
all $i,j$. We will be through if we can find a polynomial $\tilde f \in R[X]$ which lifts $f$ and satisfies 
$\tilde f (R)=\{\beta_1, \beta_2, ... ,\beta_n\}$. As $R$ has a finite index of nilpotency, by {\it proposition 2.3(ii)}, 
$R^*$ has a finite exponent, say $e'$. If $\tilde f \in R[X]$ is defined by 
$\tilde f(X):=\sum_{i=1}^n\beta_i(\prod_{j\neq i}(X-\alpha_j))^{Ne'}$, where $N$ is a positive integer such that $Ne'$ is 
greater than $e$, then it is easy to see that the polynomial $\tilde f$ satisfies the desired properties.
\end{proof}

\begin{prop}
For a commutative ring with unity $R$ , the characteristic function of some non-empty proper subset of $R$ can be given by a polynomial with co-efficients in $R$ (in short , $R$ admits a non-trivial  polynomial characteristic function ) if and only if $R$ is local , zero dimensional , with finite residue field and finite index of nilpotency. 
\end{prop}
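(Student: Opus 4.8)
The plan is to assemble the ingredients already in hand: Lemma 2.5 and Proposition 2.6 between them do all the real work, so what remains is essentially bookkeeping, with only one small verification needed. I would treat the forward implication first. Suppose $R$ admits a nontrivial polynomial characteristic function, i.e.\ $f\in R[X]$ satisfies $f=\chi_S$ for some $\emptyset\neq S\subsetneq R$. Since $S$ is non-empty and proper, $f$ attains both values $0$ and $1$, so $f(R)=\{0,1\}$ is finite. The only point requiring an argument is to check the hypothesis of Lemma 2.5, namely that for every maximal ideal $\mathfrak m\subseteq R$ the reduction $\bar f\in (R/\mathfrak m)[X]$ is a non-constant \emph{function} on $R/\mathfrak m$: picking $a\in S$ and $b\in R\setminus S$ we have $f(a)-f(b)=1\notin\mathfrak m$, so $\bar f(\bar a)\neq\bar f(\bar b)$. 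Hence Lemma 2.5 applies: $R$ is zero dimensional and semi-local with finite residue fields, and since $|f(R)|=2$ has exactly one prime factor counted with multiplicity (equivalently, by the ``in particular'' clause of Lemma 2.5), $|\operatorname{Spec}R|=1$, so $R$ is local. Thus $(R,\mathfrak m)$ is a zero dimensional local ring with finite residue field, whereupon Proposition 2.6 applies verbatim --- its standing hypothesis holds and its condition (ii) is precisely the existence of a nontrivial polynomial characteristic function --- and its concluding clause gives that $R$ has a finite index of nilpotency. This finishes one direction.

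For the converse, suppose $(R,\mathfrak m)$ is local, zero dimensional, with finite residue field and finite index of nilpotency. Since $R/\mathfrak m$ is a finite field with at least two elements, the indicator function of $\{\bar 0\}\subsetneq R/\mathfrak m$ is a non-constant polynomial function $f\in(R/\mathfrak m)[X]$ with $|f(R/\mathfrak m)|=2$. The ``Conversely'' part of Proposition 2.6 then produces a lift $\tilde f\in R[X]$ with $\bar{\tilde f}=f$ and $|\tilde f(R)|=2$; inspecting its explicit construction $\tilde f(X)=\sum_i\beta_i\bigl(\prod_{j\neq i}(X-\alpha_j)\bigr)^{Ne'}$, one may take all the lifts $\beta_i$ inside $\{0,1\}\subseteq R$, so that $\tilde f(R)=\{0,1\}$ and $\tilde f$ is literally the characteristic function of the proper non-empty subset $\tilde f^{-1}(1)$. (Alternatively, without opening the construction: this $\tilde f$ already satisfies condition (i) of Proposition 2.6, whence by the equivalence (i)$\Leftrightarrow$(ii) there, $R$ admits a nontrivial polynomial characteristic function.)

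The only place where a reader is likely to pause --- and the ``hard part'', such as it is --- is the appeal to Lemma 2.5 in the forward direction, where one must be sure that a genuine characteristic function does reduce to a non-constant function modulo every maximal ideal; the example $X(X+1)$ on $\mathbb Z/2\mathbb Z\times\mathbb Z/3\mathbb Z$ shows that this cannot be dropped in general, but, as shown above, for honest $\{0,1\}$-valued functions it is a one-line check. Everything else is a direct citation of Lemma 2.5 and Proposition 2.6.
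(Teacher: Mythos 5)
Your proof is correct and takes essentially the same route as the paper, whose proof of this proposition is just the one-line citation ``Follows readily from Lemma 2.4, Lemma 2.5 and Proposition 2.6''; you assemble exactly those ingredients (Lemma 2.4 entering implicitly through Lemma 2.5), and you usefully make explicit the small check that a genuine characteristic function reduces to a non-constant function modulo every maximal ideal.
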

\begin{proof} 

Follows readily from Lemma 2.4 , Lemma 2.5 and Proposition 2.6 

\end{proof}

\begin{remark}

Any nontrivial polynomial characteristic function of a local ring $(R,\mathfrak{m})$ must factor through 
$R/\mathfrak{m}$. So arbitrary characteristic functions on $R$ cannot cannot be given by polynomials. For example, if 
the characteristic function of $R^*$ is given by a polynomial then $R$ is finite. 
\end{remark}

\end{document}